\newtheorem{theorem}{Theorem}
\newtheorem{proposition}[theorem]{Proposition}
\newtheorem{corollary}[theorem]{Corollary}
\newtheorem{lemma}[theorem]{Lemma}
\def\r{\mathbb R}
\date{}
\begin{document}
\title{Capillary surfaces of constant mean curvature in a right solid cylinder }
\author{Rafael L\'opez}
 \address{Departamento de Geometr\'{\i}a y Topolog\'{\i}a\\
Universidad de Granada\\
18071 Granada, Spain\\}
 \email{ rcamino@ugr.es}
 \author{Juncheol Pyo}
\address{Department of Mathematics\\ Pusan National University\\ Busan 609-735, Korea}
\email{jcpyo@pusan.ac.kr}

\begin{abstract}In this paper we investigate constant mean curvature surfaces with nonempty boundary in Euclidean space that meet a right cylinder at a constant angle along the boundary. If the surface lies inside of the cylinder, we obtain some results of symmetry by using the Alexandrov reflection method. When the mean curvature is zero, we give sufficient conditions to obtain that the surface is part of a plane or a catenoid.
\end{abstract}

\subjclass[2000]{ 53A10, 49Q10, 76B45, 76D45}
\keywords{ capillary surface, mean curvature, flux formula, Alexandrov method}

 \maketitle

\section{Introduction}
In absence of gravity, a given amount of liquid $W$ placed on a solid substrate $\Sigma$ in an equilibrium configuration meets $\Sigma$ in a constant angle $\gamma$. Moreover, the mean curvature $H$ of the air-liquid interface $S$ of the drop is proportional to the pressure change across $S$. In the case that these pressures are constant, $S$ is a surface of constant mean curvature. On the other hand, the shape of a liquid that rises in a capillary tube is modeled by a surface of constant mean curvature and meets the walls of the tube in a constant angle. In general, we refer a capillary surface on $\Sigma$ as a constant mean curvature surface $S$ such that its boundary $\partial S$ lies in $\Sigma$ and the angle between the unit normal of $S$ and $\Sigma$ is constant along $\partial S$.
In this paper we study capillary surfaces in right cylinders. In the Euclidean space $\r^3$ we consider $(x,y,z)$ the usual coordinates, where the $z$-axis indicates the vertical direction. Let $\Pi$ be the plane $z=0$ and $C\subset\Pi$ a simply closed curve. Define the right cylinder on base $C$ as the set $\Sigma=C\times\r$. If $C$ is a circle, we say that $\Sigma$ is a circular right cylinder and denote by $L$ its axis. We call the inside of $\Sigma$ the set $K=\Omega\times\r$, where
$\Omega\subset\Pi$ is the bounded domain by $C$. In the context of capillary surfaces in right cylinders, the case most studied has been when $S$ is a graph on $\Omega$ representing the ascend by capillarity forces of a liquid on the tube $K$ \cite{fi}. Here we consider a more general setting of a parametric surface, that is, an immersion $\phi:S\rightarrow \r^3$ of an oriented surface $S$. If $\phi:S\rightarrow\phi(S)$ is a homeomorphism, we say that $S$ is an embedded surface in $\r^3$ and we identify $S$ with $\phi(S)$. A liquid drop in the cylinder $K$ is then viewed as a domain $W\subset\r^3$ such that its boundary $\partial W$ is written by $\partial W=T\cup S$, where $T\subset \Sigma$ and $S=\partial W\setminus T$ and $\partial W$ is not smooth along $\partial S$. Physically, $T$ is the wetted region of $\Sigma$ by $W$.

Examples of capillary surfaces in a circular right cylinder $\Sigma$ is an appropriate piece of a Delaunay surface whose axis agrees with the one of $\Sigma$. A Delaunay surface is constructed by rolling a conic along a straight line in the plane and taking the trace of the focus \cite{de}. This trace then describes a planar curve which is rotated
about the axis along which it was rolled. These surfaces are the catenoids, unduloids, nodoids, right circular cylinders and spheres. Also, the round disc $\Omega\times\{t\}$ is a minimal surface intersecting orthogonally $\Sigma$. In all these examples, the boundaries of the surfaces are curves homotopic to $C$ on $\Sigma$. Our interest is also addressed on liquid drops where the boundary $\partial S$ is non-homotopic to $C$ on $\Sigma$ as it occurs when we deposit a small volume of liquid in the wall $\Sigma$ of the tube $K$.

In section \ref{sec4}, we study compact capillary surfaces in a right cylinder.  We prove that certain capillary surfaces have some symmetric planes by the Alexandrov reflection method (Theorem \ref{t1}, Theorem \ref{T2}). In case the mean curvature of the capillary surface is zero and the boundary is graph on $\partial\Omega$, we prove that the capillary minimal surface is a vertical translation of $\Omega$ (Theorem \ref{t3}).

In our last section, we consider complete capillary surfaces of zero mean curvature on $\Sigma$. Except $\Pi\setminus\Omega$ meeting $\Sigma$ with a right angle, a first example is obtained when we take a circle $C$ in a catenoid. This circle divides the catenoid into two pieces both are capillary surfaces on $\Sigma=C\times\r$ and only one is included in the outside of $K$.
We prove that these surfaces are the only minimal capillary surfaces lying outside of a right cylinder under reasonable assumptions on the behavior of the end of surfaces (Theorem \ref{t4}).
\section{ Compact capillary surfaces in cylinders}\label{sec4}
In this section we consider a compact capillary embedded surface $S$ whose boundary $\partial S$ lies in a right cylinder $\Sigma$. Denote $\partial S=\Gamma_1\cup\ldots\cup\Gamma_n$ the decomposition of $\partial S$ into its connected components.
In our techniques we shall use the Hopf maximum principle of elliptic equations of divergence type, that in our context of cmc surfaces, it is the so-called {\it tangency principle}.
\begin{proposition}[Tangency principle] Let $S_1$ and $S_2$ be two orientable surfaces of $\r^3$ that are tangent at some common point $p$. Assume that $p$ lies in the interiors of both $S_1$ and $S_2$ or $p\in\partial S_1\cap\partial S_2$ and the tangent lines of $\partial S_1$ and $\partial S_2$ coincide at $p$. Let us orient $S_1$ and $S_2$ such that both orientations $N(p)$ agree. Assume that with respect to the reference system determined by $N(p)$, the surface $S_1$ lies above $S_2$ near $p$. If $H_1\leq H_2$ at $p$, then $S_1$ and $S_2$ coincide in an open around $p$.
\end{proposition}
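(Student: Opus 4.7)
The strategy is to put both surfaces in graph form over their common tangent plane at $p$, linearize the quasilinear mean curvature equation along the segment joining the two graphs, and apply the Hopf maximum principle---in its interior strong form for the first alternative and in its boundary point version for the second.

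First I would apply a rigid motion so that $p$ is the origin, the common tangent plane at $p$ is $\{z=0\}$, and $N(p)=(0,0,1)$. By the implicit function theorem each $S_i$ is, near $p$, the graph of a smooth function $z=u_i(x,y)$ with $u_i(0,0)=0$ and $\nabla u_i(0,0)=0$, oriented by the upward unit normal. Writing the mean curvature of a graph as
\[
2H(u)=\operatorname{div}\!\Bigl(\frac{\nabla u}{\sqrt{1+|\nabla u|^2}}\Bigr),
\]
the hypothesis that $S_1$ lies above $S_2$ becomes $w:=u_1-u_2\ge 0$ on the common domain, with $w(0)=0$ and $\nabla w(0)=0$. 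The standard device
\[
2(H_1-H_2)=\int_0^1\frac{d}{dt}\operatorname{div}\!\Bigl(\frac{\nabla(tu_1+(1-t)u_2)}{\sqrt{1+|\nabla(tu_1+(1-t)u_2)|^2}}\Bigr)\,dt
\]
rewrites the difference as $Lw$, where $L=a^{ij}(x)\partial_{ij}+b^i(x)\partial_i$ has continuous coefficients, is uniformly elliptic on a small neighborhood of the origin, and carries \emph{no} zeroth-order term (since the mean curvature operator depends only on the derivatives of $u$). Thus $Lw\le 0$, $w\ge 0$, and $w(0)=0$.

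If $p$ lies in the interior, $w$ attains a non-positive minimum at an interior point of its domain, and Hopf's strong maximum principle (applicable because $L$ has no zeroth-order term) forces $w\equiv 0$ near the origin; hence $S_1=S_2$ near $p$.

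The main obstacle is the boundary case. Here the projections $\Omega_i\subset\{z=0\}$ of the two surfaces are one-sided neighborhoods of the origin bounded by the projected curves $\partial\Omega_i$, and the hypothesis provides only a common tangent line $\ell$ at $0$. Since both $\partial\Omega_i$ are $C^2$ curves tangent to $\ell$ there, the intersection $\Omega_1\cap\Omega_2$ satisfies the interior sphere condition at $0$: for $r$ small enough, the open disc of radius $r$ tangent to $\ell$ at $0$ and lying on the appropriate side is contained in $\Omega_1\cap\Omega_2$. Applied on such a disc, the Hopf boundary point lemma gives either $w\equiv 0$ or $\partial w/\partial\nu(0)<0$, with $\nu$ the outward unit normal to the disc at $0$. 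Since $\nabla w(0)=0$ rules out the derivative alternative, we conclude $w\equiv 0$ and therefore $S_1\equiv S_2$ in a neighborhood of $p$.
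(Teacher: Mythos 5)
The paper states this proposition without proof, treating it as the standard consequence of the Hopf maximum principle for the quasilinear mean curvature operator; your argument is precisely that standard proof and is correct, including the sign bookkeeping ($w=u_1-u_2\ge 0$, $Lw=2(H_1-H_2)\le 0$, minimum attained at the origin). The only point to state a little more carefully is the boundary case: the clean dichotomy is that either $w\equiv 0$ on the interior disc (whence, by the interior strong minimum principle on the connected common domain, $w\equiv 0$ near $0$), or $w>0$ throughout the disc, in which case Hopf's boundary point lemma gives $\partial w/\partial\nu(0)<0$, contradicting $\nabla w(0)=0$. Note also that your proof covers exactly the two cases in the statement; the genuine corner situation invoked later in the paper (Serrin's corner lemma) is a separate, more delicate matter not claimed by this proposition.
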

\begin{theorem}\label{t1} Let $\Sigma$ a right cylinder and let $S$ be a capillary embedded compact surface on $\Sigma$ and included in $K$. Assume that all the curves $\Gamma_i$ are nullhomotopic curves on $\Sigma$. Then $S$ has a plane of symmetry parallel to $\Pi$. In addition, if $\Sigma$ is a circular right cylinder with axis $L$ and $\partial S$ is strictly contained in a halfcylinder determined by a plane containing $L$, then $S$ has a plane of symmetry containing $L$ and $S$ is topologically a disc.
\end{theorem}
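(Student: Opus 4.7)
The plan for the first assertion is to apply the Alexandrov moving-planes method to the pencil of horizontal planes $P_t=\{z=t\}$. Each $P_t$ is a plane of symmetry of $\Sigma$ and of $K$, so reflecting $S\cap\{z>t\}$ across $P_t$ produces a surface $\tilde{S}_t\subset K$ whose boundary remains on $\Sigma$. Starting from $t>\max_S z$ and decreasing $t$, I look for the largest value $t^*$ at which $\tilde{S}_t$ first touches $S\cap\{z<t\}$. Interior contacts are handled by the interior case of the tangency principle. At a contact point $p\in\partial S\cap\Sigma$, the key observation is that horizontal reflection preserves the normal field of $\Sigma$, so $S$ and $\tilde{S}_{t^*}$ both meet $\Sigma$ at the constant angle $\gamma$; together with the surface tangency, this forces the boundary curves $\partial S$ and $\partial\tilde{S}_{t^*}$ to share a tangent line at $p$, and the boundary case of the tangency principle applies. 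In either case $\tilde{S}_{t^*}=S\cap\{z<t^*\}$, and $P_{t^*}$ is the desired horizontal plane of symmetry.

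For the second assertion the same machinery is applied to the pencil of vertical planes $P_\theta$ through $L$, each of which also preserves $K$ and $\Sigma$. The hypothesis supplies $\theta_0$ with $\partial S\subset H_{\theta_0}^+$ strictly, where $H_{\theta_0}^+$ is the open halfcylinder bounded by $P_{\theta_0}$. For $\theta$ close to $\theta_0$ the reflection $R_\theta(S\cap H_\theta^-)$ lies in $H_\theta^+$ with boundary only on $P_\theta$, and as $\theta$ rotates away from $\theta_0$ I look for the first angle $\theta^*$ at which $R_{\theta^*}(S\cap H_{\theta^*}^-)$ touches $S\cap H_{\theta^*}^+$. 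The tangency principle, in exactly the same form as above, yields $R_{\theta^*}(S)=S$, so $P_{\theta^*}$ is a plane of symmetry of $S$ containing $L$.

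For the topological disc claim, $R_{\theta^*}$ permutes the components $\Gamma_i$; since all $\Gamma_i$ lie in $H_{\theta_0}^+$ strictly, while any two distinct components paired by $R_{\theta^*}$ would sit in opposite halves of $P_{\theta^*}$ yet both inside $H_{\theta_0}^+$, a combinatorial argument using embeddedness and the nullhomotopy of each $\Gamma_i$ forces $\partial S$ to be a single $R_{\theta^*}$-invariant simple closed curve. In addition, the moving-plane construction shows that one half of $S$ cut by $P_{\theta^*}$ is a graph over its projection to $P_{\theta^*}$; by symmetry $S$ itself is topologically a disc.

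The main obstacle is the setup of the second assertion. Vertical planes through $L$ cannot be translated arbitrarily far from $S$, so the Alexandrov process must be started from the strict halfcylinder hypothesis, and one has to verify that the rotating reflections produce a genuine first tangential contact before the rotation sweeps through $180^\circ$. Resolving the contact at a boundary point on $\Sigma$—where the capillary angle is precisely what converts a surface tangency into a tangency of the two boundary curves, so that the boundary case of the tangency principle can be invoked—is the technical heart of the argument.
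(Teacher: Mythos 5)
Your first assertion follows essentially the paper's argument (up to reversing the direction of the sweep: the paper starts at $t=-\infty$ and increases $t$). Your treatment of interior contacts and of capillary boundary contacts on $\Sigma$ is the standard one and matches the paper; the only omission is the corner case in which the contact point lies in $\partial S\cap\Pi(t^{*})$, where both surfaces are graphs over a corner domain and the paper invokes Serrin's corner lemma rather than the ordinary boundary point lemma.

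The second assertion, however, contains a genuine gap, and it is exactly the one you flag in your last paragraph without resolving. A rotating pencil of planes through $L$ has no position ``at infinity'' from which to launch the Alexandrov process: at the starting angle $\theta_0$ you only know that $\partial S$ lies strictly in $H_{\theta_0}^{+}$, but $S$ itself may well enter $H_{\theta_0}^{-}$, so there is no reason why $R_{\theta_0}(S\cap H_{\theta_0}^{-})$ should lie on the correct side of $S\cap H_{\theta_0}^{+}$ (i.e.\ satisfy $\hat S\leq S^{+}$ and $\hat S\subset W$). Without that initial ordering, the ``first contact angle $\theta^{*}$'' is not well defined and the method cannot start. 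The paper closes precisely this gap with a preliminary sweep by the \emph{parallel} vertical planes $P(t)=\{x=t\}$, $t\in(-\infty,0]$, taking $L$ to be the $z$-axis and $\partial S\subset\{x>0\}$: starting from $t=-\infty$, any contact for $t\leq 0$ must occur at an interior point (because $\partial S\subset\{x>0\}$) and would force $P(t)$ to be a plane of symmetry of $S$, contradicting $\partial S\subset\{x>0\}$; hence one reaches $t=0$ with $\hat S_{P}(0)\leq S_{P}(0)^{+}$, and since $P(0)=Q(0)$ this is the admissible starting configuration for the rotating planes $Q(t)$. You need this step (or a substitute for it) before your rotation argument can begin. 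Finally, your justification of the disc statement is vaguer than the paper's: rather than a combinatorial argument on the components of $\partial S$, the paper uses the invariance of $S$ under the two orthogonal symmetry planes $\Pi(t_1)$ and $Q(t_1)$ together with the fact that the Alexandrov method exhibits each of the resulting pieces of $S$ as a graph over the corresponding plane.
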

\begin{proof} Because the curves $\Gamma_i$ are nullhomotopic on $\Sigma$, the surface $S$ together a bounded set $\Lambda\subset\Sigma$ defines a $3$-domain $W\subset K$. Orient $S$ by the Gauss map $N$ pointing to $W$ and let $H$ be the mean curvature of $S$. Consider the uniparametric family of horizontal planes $\{\Pi(t);t\in\r\}$ where $\Pi(t)$ is the plane of equation $z=t$ and we apply the Alexandrov reflection method \cite{al}. See Figure \ref{fig1}, left. We describe briefly the main ingredients of the technique. If $p\in\r^3$, we denote $p=(p_1,p_2,p_3)$. We introduce the next notation. If $A\subset\r^3$, let $A(t)^{-}=\{p\in A: p_3> t\}$ and $A(t)^{+}=\{p\in A: p_3< t\}$. Let $\hat{A}(t)$ be the reflection of $A(t)^+$ about $\Pi(t)$. We write $A\leq B$ if for every $p\in A$, $q\in B$ such that $(p_1,p_2)=(q_1,q_2)$, we have $p_3\leq q_3$.
Because $\overline{W}$ is compact, consider $t$ close to $t=-\infty$ such that $S(t)^{-}=\emptyset$. We increase the value $t$ until the first value $t_0>0$ such that $\Pi(t_0)$ touches $\overline{W}$. Because $S$ is embedded, for a small $\epsilon>0$, we have $$\hat{S}(t)\leq S(t)^{+}\ \ \mbox{and}\ \ \hat{S}(t)\subset W,$$
for every $t\in (t_0,t_0+\epsilon)$. For these values of $t$, the surface $S(t)^{-}$ is a graph on $\Pi(t)$. Next we move up $\Pi(t)$ and reflecting $S(t)^{-}$ across $\Pi(t)$ until the first touching point $p$ between $\hat{S}(t)$ and $S(t)^{+}$. Let
$$t_1=\sup\{t; \hat{S}(t)\leq S(t)^{+}, \hat{S}(t)\subset W\}.$$
We analyze the different possibilities of the point $p$.
\begin{enumerate}
\item The point $p$ is an interior point of $\hat{S}(t_1)$ and $S(t_1)^+$ with $p\not\in \Pi(t_1)$. The mean curvature of $\hat{S}(t_1)$ is $H$ with the orientation pointing to $W$. Then tangency principle implies that the surfaces $\hat{S}(t_1)$ and $S(t_1)^+$ agree in an open of $p$ and by connectedness, $\hat{S}(t_1)=S(t_1)^+$. This means that $\Pi(t_1)$ is a plane of symmetry of $S$, proving the result.
\item The point $p$ is an interior point of $\hat{S}(t_1)$ and $S(t_1)^+$ with $p\in \Pi(t_1)$. The boundary version of the tangency principle implies $\hat{S}(t_1)=S(t_1)^+$ again.
\item The point $p$ is a boundary point of $S$. The surfaces $\hat{S}(t_1)$ and $S(t_1)^+$ are tangent at $p$ where both surfaces may be expressed locally as a graph over a corner domain in the common tangent plane at $p$. We apply a version of the Hopf boundary point called the Serrin corner lemma \cite{se2}. The tangency principle holds for this case showing that both surfaces agree in an open around $p$. Then the plane $\Pi(t_1)$ is a plane of symmetry of $S$.
\end{enumerate}
\begin{figure}[hbtp]
\begin{center}
\includegraphics[ width=.8\textwidth]{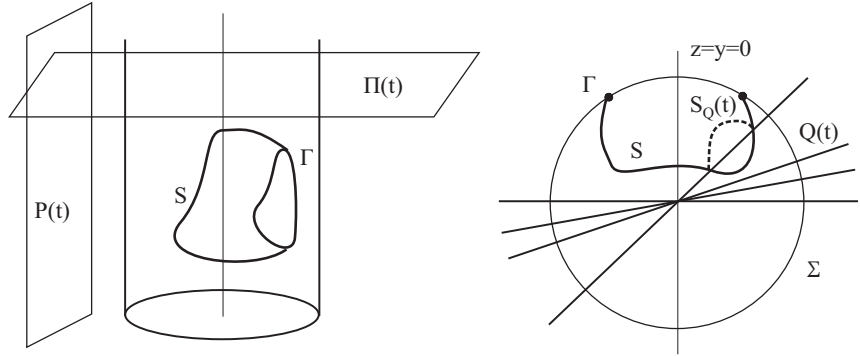}
\end{center}
\caption{Left: the planes $\Pi(t)$ and $P(t)$ in the Alexandrov reflection method; right: the capillary surface $S$ and the planes $Q(t)$ viewed from the top of the cylinder $\Sigma$}\label{fig1}
\end{figure}
For the second part of theorem, we assume that $L$ is the $z$-axis and that $\partial S$ lies in the half cylinder $\Sigma\cap\{x>0\}$. Consider the vertical planes $P(t)$ of equation $x=t$ for $t\in (-\infty,0]$ and the planes $Q(t)$ that contain $L$ parametrized by the angle $t\in [0,\pi/2]$ that makes $Q(t)$ with the vector $(1,0,0)$. See Figure \ref{fig1}. We consider the same notation as above indicating the subscripts $-_{P}$ and $-_{Q}$ is the reflection is done with respect to $P(t)$ or $Q(t)$ respectively. Then we start with the Alexandrov method with reflections across the planes $P(t)$ for value $t$ close to $t=-\infty$. Next, let $t\rightarrow 0$ until we touch $S$ and let follow reflecting the surface behind $P(t)$ about $P(t)$. Assume that there exist $t\leq 0$ such that $\hat{S}_P(t)\leq S_P(t)^{+}$ and
$\hat{S}_P(t)$ and $S_P(t)^{+}$ have a common touching point $p$. Because the cylinder is circular and $\partial S\subset\{x>0\}$, then $p\not\in\partial S$. Then $p$ is an interior point of $S$ and the tangency principle would say that $P(t)$ is a plane of symmetry of $S$, which it is a contradiction with the fact that $\partial S\subset \{x>0\}$. Therefore, we can arrive until $t=0$ having $\hat{S}_P(0)\leq S_P(0)^+$. We remark that it is possible that $\hat{S}_P(0)=\emptyset$, that is, no plane $P(t)$, $t\leq 0$, intersects $S$, which it occurs if $S$ is included in the half cylinder $K\cap\{x>0\}$.
Once arrived at the position $P(0)$, we change the reflection technique by the planes $Q(t)$. We start from $t=0$, reflecting the surface and letting $t\nearrow 0$. We remark that $P(0)=Q(0)$ in such way that the Alexandrov method works. The planes $Q(t)$ sweep all the half cylinder $\Sigma\cap \{x>0\}$. See Figure \ref{fig1}, right. Then in the Alexandrov method, necessarily there is a first time $t_1\in (0,\pi/2]$ such that $\hat{S}_Q(t_1)$ and $S_Q(t_1)^+$ have a common touching point. This point can be an interior point of $S$ or a corner point. Anyway the tangency principle assures that $Q(t_1)$ is a plane of symmetry of $S$. The statement on the topology of $S$ is a consequence that the surface $S$ is invariant by the symmetries across the two orthogonal planes $\Pi(t_1)$ and $Q(t_1)$ and the fact that by the Alexandrov method, each one of the parts that both planes divides $S$ is a graph on these planes.
\end{proof}
Since the reflections about the planes $P(t)$ and $Q(t)$ do not change the third spatial coordinate, we conclude:
\begin{corollary} Let $\Sigma$ a circular right cylinder and let $S$ be an embedded compact surface included in $K$ such that its mean curvature $H$ does not depend on $z$. If $\partial S$ is strictly contained in a halfcylinder determined by a plane containing $L$ and $S$ meets $\Sigma$ with constant angle, then there exists a plane $P$ containing $L$ such that $S$ is invariant by the symmetry across $P$.
\end{corollary}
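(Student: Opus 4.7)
The plan is to replay, essentially verbatim, the second half of the proof of Theorem~\ref{t1}. That argument is built entirely out of horizontal reflections: first across the family $P(t)$ of vertical planes of equation $x=t$, then across the pencil $Q(t)$ of planes through the axis $L$. Each reflection in both families preserves the third spatial coordinate pointwise---this is precisely the remark the author makes immediately before stating the corollary. Under the hypothesis that $H$ has no $z$-dependence, such a horizontal reflection sends $S$ to a surface carrying the same prescribed mean-curvature data, and in particular the tangency principle applies at a first touching point exactly as in the CMC case of Theorem~\ref{t1}. The constant contact angle condition is inherited as well, since horizontal reflections preserve $\Sigma$ and the angles that $S$ makes with $\Sigma$.

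Concretely I would run two sweeps. First, the planes $P(t)$ are brought in from $t=-\infty$ up toward $t=0$. For any $t\leq 0$, the strict inclusion $\partial S\subset\{x>0\}$ together with the circularity of $\Sigma$ forces any common touching point of $\hat S_P(t)$ with $S_P(t)^+$ to lie in the interior of $S$; the tangency principle would then make $P(t)$ a plane of symmetry of $S$, contradicting the position of $\partial S$. Hence one reaches $\hat S_P(0)\leq S_P(0)^+$, possibly vacuously. Second, at $t=0$ one has $P(0)=Q(0)$, and I would switch to the pencil $Q(t)$ for $t\in[0,\pi/2]$, which sweeps all of $\Sigma\cap\{x>0\}$. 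A first $t_1\in(0,\pi/2]$ at which $\hat S_Q(t_1)$ meets $S_Q(t_1)^+$ is therefore forced. At this touching---interior to $S$, on $Q(t_1)$, or at a corner of $\partial S$ handled by Serrin's corner lemma together with the constant contact angle---the tangency principle yields $\hat S_Q(t_1)=S_Q(t_1)^+$, and $P:=Q(t_1)$ is the desired plane through $L$ of symmetry of $S$.

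The only place the argument of Theorem~\ref{t1} needs to be revisited is the applicability of the tangency principle without constant mean curvature, and this is exactly what the $z$-invariance of the reflections delivers: the horizontally reflected surface satisfies the same mean-curvature equation as $S$, so the Hopf-type comparison applies unchanged. Once this one point is granted, the rest is a line-by-line rerun of the second part of Theorem~\ref{t1}, and no further modification is needed.
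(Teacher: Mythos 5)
Your overall strategy is exactly the paper's: the paper offers no separate proof of this corollary beyond the sentence that precedes it, and the intended argument is precisely the rerun of the second half of the proof of Theorem \ref{t1} with the families $P(t)$ and $Q(t)$ that you describe. The two sweeps, the exclusion of boundary touching points for the $P(t)$ family via $\partial S\subset\{x>0\}$ and the circularity of $\Sigma$, the switch to the pencil $Q(t)$ at $P(0)=Q(0)$, and the case analysis at the first contact time $t_1$ all match the paper.

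There is, however, one genuine gap, and it sits at the single point you yourself single out as the only one needing care. You argue: the reflections preserve the third coordinate, $H$ has no $z$-dependence, therefore the reflected surface carries the same prescribed mean-curvature data. This is a non sequitur. If $H$ is a nonconstant function of $(x,y)$ --- which is what ``does not depend on $z$'' literally allows --- then a reflection across a vertical plane $P(t)$ or $Q(t)$ moves precisely the horizontal coordinates, and at a first interior touching point $p$ lying off the reflection plane the two tangent surfaces have mean curvatures $H(\sigma(p_1,p_2))$ and $H(p_1,p_2)$, where $\sigma$ is the induced reflection of the $(x,y)$-plane; these need not satisfy the inequality required by the tangency principle, and the Alexandrov argument breaks down. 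What the preservation of the third coordinate actually buys is the opposite situation: the argument goes through verbatim when $H$ depends \emph{only} on $z$ (for instance $H$ linear in $z$, the gravitational case the paper invokes immediately after the corollary), since then the reflected surface satisfies the same prescribed-curvature condition at every point of the same height. This infelicity is inherited from the paper's own phrasing of the hypothesis, but your proof makes it load-bearing: to close the gap you should either read the hypothesis as ``$H$ is a function of $z$ alone,'' which is evidently what is intended, or else supply an additional invariance of $H$ under the reflections used, without which the tangency-principle step fails.
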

We point out that under a (vertical) gravitational field, the mean curvature $H$ of a liquid drops resting in a support is a linear function on the $z$-coordinate.
\begin{theorem}\label{T2} Let $\Sigma$ a right cylinder and let $S$ be a capillary embedded compact surface on $\Sigma$ and included in $K$.
\begin{enumerate}
\item If $\partial S$ is a connected curve homotopic to $C$ on $\Sigma$, then $S$ is a graph on $\Omega$.
\item If $\partial S=\Gamma_1\cup\Gamma_2$ and each $\Gamma_{i},$ $(i=1,2)$ is homotopic to $C$ on $\Sigma$, then $S$ has a plane of symmetry parallel to $\Pi$.
\end{enumerate}
\end{theorem}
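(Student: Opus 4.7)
The plan is to adapt the Alexandrov reflection method of Theorem \ref{t1}, but now running the horizontal planes $\Pi(t)$ from above. I start at a value $t > t_{\max}:=\max_{p\in S} p_3$, for which $S(t)^-$ is empty, and decrease $t$, reflecting $S(t)^-$ across $\Pi(t)$ to obtain $\hat{S}(t)$ below $\Pi(t)$. The process either terminates at a first value $t_1 \in (t_{\min}, t_{\max})$ (with $t_{\min}:=\min_{p\in S} p_3$) at which $\hat{S}(t_1)$ meets $S(t_1)^+$---in which case the same three-case tangency analysis used in Theorem \ref{t1} (interior point, interior point on $\Pi(t_1)$, or boundary corner via the Serrin lemma) forces $\Pi(t_1)$ to be a plane of symmetry of $S$---or it succeeds all the way down to $t=t_{\min}$ without any touching, in which case $S$ is a graph over $\Omega$. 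The latter conclusion is standard: if two distinct points of $S$ were to share a vertical line at heights $z_1<z_2$, then at $t=(z_1+z_2)/2$ the reflection would already produce a touching, contrary to assumption. Thus the proof reduces, in each part, to a graph-versus-symmetry dichotomy.

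For (2), I observe that a graph on $\Omega$ would have as boundary the graph of its boundary height function over the connected curve $C=\partial\Omega$, hence a single connected curve on $\Sigma$. This is incompatible with $\partial S=\Gamma_1\cup\Gamma_2$ having two components, so the first-touching alternative must occur and the plane $\Pi(t_1)$ it produces is the required symmetry plane parallel to $\Pi$.

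For (1), with $\partial S$ connected, I must instead rule out the symmetric alternative. Suppose $\Pi(t_1)$ were a plane of symmetry with reflection $\sigma$. The restriction $\phi:=\sigma|_{\partial S}$ is an involution of the circle $\partial S$, leaving three cases: $\phi$ has no fixed point, $\phi$ has exactly two fixed points, or $\phi=\mathrm{id}$. The no-fixed-point case is immediate, since then $\partial S\cap\Pi(t_1)=\emptyset$ would place $\partial S$ strictly on one side of $\Pi(t_1)$, violating $\sigma$-invariance. The identity case forces $\partial S\subset C\times\{t_1\}$, and because $\partial S$ is essential this gives $\partial S=C\times\{t_1\}$; $\sigma$-invariance of the tangent plane of $S$ along $\partial S$ together with the capillary angle condition then constrain $S$ to be the flat disc $\Omega\times\{t_1\}$, which is itself a graph. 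The crux is the two-fixed-point case, which I would exclude by a winding argument: lifting $\partial S$ to the universal cover $\r^2$ of $\Sigma$ by $\tilde\gamma(s)$ satisfying $\tilde\gamma(s+L)=\tilde\gamma(s)+(2\pi,0)$ (the $2\pi$-shift records that $\partial S$ is homotopic to $C$), and lifting $\sigma$ and $\phi$ consistently, the identity $\sigma\circ\gamma=\gamma\circ\phi$ reads $\tilde\sigma\circ\tilde\gamma(s)=\tilde\gamma\circ\tilde\phi(s)+(2\pi N(s),0)$ for an integer-valued continuous function $N(s)$, hence constant on each connected arc of $\partial S\setminus\mathrm{Fix}(\phi)$. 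Evaluating $N$ at the two endpoints of one such arc yields integer values that differ by the winding of $\partial S$ around $C$, namely by $\pm 1$---a contradiction. Therefore no such symmetry plane can exist and $S$ must be a graph on $\Omega$.

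The hardest step is the two-fixed-point case in Part (1): the winding argument on the universal cover of $\Sigma$ requires careful book-keeping of the lifts of $\gamma$, $\sigma$ and $\phi$, and is the only place where the hypothesis that $\partial S$ is \emph{essential} and not merely connected is used in an essential way. The rest of the proof follows directly from the Alexandrov reflection dichotomy already exploited in Theorem \ref{t1}.
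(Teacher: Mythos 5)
Your proposal is correct and rests on the same Alexandrov reflection skeleton with horizontal planes $\Pi(t)$ as the paper's proof (running the planes downward rather than upward is immaterial), but it differs in how the two branches of the reflection dichotomy are eliminated, and in both places your route is the more explicit one. For part (1) the paper reflects into the unbounded region $W$ above $S$ and simply asserts that a horizontal symmetry plane ``is not possible because $\partial S$ is connected and homotopic to $C$''; you actually prove this, classifying the induced involution $\phi=\sigma|_{\partial S}$ of the boundary circle by its fixed-point set ($\emptyset$, exactly two points, or all of $\partial S$) and killing the essential two-fixed-point case by a winding argument --- equivalently, since $\sigma$ preserves the projection $\pi_C:\Sigma=C\times\r\to C$, the identity $\pi_C\circ\gamma\circ\phi=\pi_C\circ\gamma$ forces $\deg\phi=+1$ while an involution with exactly two fixed points has $\deg\phi=-1$. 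That is genuine gap-filling relative to the paper. For part (2) the paper guarantees a first touching from the compactness of the region enclosed by $S$ and the annulus $\Lambda\subset\Sigma$ between $\Gamma_1$ and $\Gamma_2$, whereas you rule out the ``no touching'' branch by noting that a graph over $\Omega$ has connected boundary; both are valid, and yours avoids introducing the enclosed region. Two small points to tidy: in the identity case of part (1), the conclusion ``$S$ is the flat disc'' is only coherent when $H=0$ --- for $H\neq 0$ the same local argument ($S$ is a single-valued graph near $\partial S$ invariant under $\sigma$, hence flat there) yields a contradiction instead, but either way this branch is harmless; and you should add a line explaining why the graph's projection is all of $\Omega$ (a vertical line in $K$ missing $S$ would join the two components of $K\setminus S$, contradicting that $\partial S$ is homotopic to $C$).
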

\begin{proof}
\begin{enumerate}
\item The boundary curve $\partial S$ divides $\Sigma$ in two components. Take $\Sigma^{+}$ the component such that $\Sigma^+\cap \Pi(t)=\emptyset$ for some negative number $t$. Then
$S$ together $\Sigma^{+}$ divides $K$ in two unbounded components and we fix $W$ the one such that $W\cap \Pi(t)=\emptyset$ for some negative number $t$. We apply the Alexandrov method by planes $\Pi(t)$ again as in Theorem \ref{t1} starting with planes $\Pi(t)$ and for value $t$ close to $t=-\infty$. Because $S$ is compact, we have the value $t_0$ as in Theorem \ref{t1}. Next, we increase $t$, reflecting $S(t)^{-}$ about $\Pi(t)$ until the value $t_1$ such that $\hat{S}(t_1)$ touches $S(t_1)^+$. Because $W$ is non compact, it can occurs that $t_1=+\infty$, that is, $\hat{S}(t)\leq S(t)^+$ for any $t$. This means that $S$ is a graph on $\Omega$, proving the result. The other possibility is that $t_1<\infty$. The tangency principle implies that there is a horizontal plane of symmetry of $S$, which is not possible because $\partial S$ is connected and homotopic to $C$.
\item The curves $\Gamma_1$ and $\Gamma_2$ define a bounded set $\Lambda\subset \Sigma$ such that together $S$ defines a bounded $3$-domain $W\subset K$. We apply the reflection method as in the first part of Theorem \ref{t1} with the planes $\Pi(t)$. There are three types of contact points, namely, interior and boundary point, and a corner point. In all these cases, $\Pi(t_1)$ is a plane of symmetry of $S$.
\end{enumerate}
\end{proof}
In case the surface has zero mean curvature,
we have a strong result under the hypothesis that the surface is immersed. Previously we recall the flux formula, which holds for surfaces of constant mean curvature.
\begin{lemma} Let $S$ be a compact surface with boundary $\partial S$ and let $\phi:S\rightarrow\r^3$ be an immersion of constant mean curvature $H$. If $\alpha=\phi_{|\partial S}:\partial S\rightarrow\r^3$, then
\begin{equation}\label{flux}
H\int_{\partial S}\langle\alpha(s)\times\alpha'(s),a\rangle\ ds=-\int_{\partial S}\langle N(s)\times\alpha'(s),a\rangle\ ds,
\end{equation}
where $a\in\r^3$, $\times$ is the vectorial product in $\r^3$ and $N$ is the Gauss map corresponding to $H$.
\end{lemma}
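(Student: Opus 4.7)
The strategy is to exhibit both sides of \eqref{flux} as two different boundary evaluations, via Stokes' theorem, of the single interior integral $2H\int_S \langle N, a\rangle\, dA$.

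First, I would exhibit a primitive $2$-form on $S$ for $\langle N,a\rangle\, dA$. The natural candidate is $\eta=\tfrac{1}{2}\langle a,\phi\times d\phi\rangle$. A short coordinate computation, using $\phi_u\times\phi_v = N\sqrt{|g|}$ together with the antisymmetry of the wedge product, shows that $d\eta = \langle N,a\rangle\, dA$. Stokes' theorem then yields
\begin{equation*}
\int_S \langle N, a\rangle\, dA = \frac{1}{2}\int_{\partial S} \langle a,\, \alpha(s)\times\alpha'(s)\rangle\, ds,
\end{equation*}
so multiplying by $2H$ gives precisely the left-hand side of \eqref{flux}.

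Second, I would compute the same interior integral using the classical identity $\Delta\phi = 2HN$, valid for an isometric immersion, where $\Delta$ is the Laplace--Beltrami operator of the pull-back metric and the sign of $H$ is fixed so that the mean curvature vector is $HN$. Pairing with the constant vector $a$ gives $\Delta\langle\phi,a\rangle = 2H\langle N,a\rangle$, and the divergence theorem on the Riemannian surface $S$ produces
\begin{equation*}
2H\int_S\langle N,a\rangle\, dA = \int_{\partial S}\langle a, \nu(s)\rangle\, ds,
\end{equation*}
where $\nu$ is the outward unit conormal to $\partial S$ in $S$. (The tangential gradient $\nabla\langle\phi,a\rangle$ is the projection of $a$ onto $TS$, which has the same pairing with the tangent vector $\nu$ as $a$ itself.) The boundary-orientation convention gives $\nu = \alpha'\times N$, and the cyclic triple-product identity rewrites $\langle a,\nu\rangle = -\langle N\times\alpha', a\rangle$.

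Equating the two expressions for $2H\int_S\langle N,a\rangle\, dA$ delivers \eqref{flux}. Both Stokes-type reductions are essentially mechanical once the primitive $\eta$ is written down; I expect no genuine analytic obstacle. The main point of care is the bookkeeping of signs and orientations, namely fixing the sign convention so that the mean curvature vector is $+HN$ and the relation $\nu = \alpha'\times N$ for the outward conormal along $\partial S$. Once these conventions are pinned down, the two computations align to give the stated identity.
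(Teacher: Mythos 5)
Your proof is correct and is in essence the same as the paper's: the paper applies Stokes' theorem to the single closed $1$-form $\omega=\langle(H\phi+N)\times d\phi,a\rangle$, and your two boundary evaluations of $2H\int_S\langle N,a\rangle\,dA$ (via the primitive $\tfrac12\langle a,\phi\times d\phi\rangle$ and via $\Delta\langle\phi,a\rangle=2H\langle N,a\rangle$) are precisely the two halves of the computation that shows $\omega$ is closed. Your write-up has the added merit of pinning down the sign and orientation conventions (mean curvature vector equal to $HN$, outward conormal $\nu=\alpha'\times N$) that the paper delegates to the cited reference.
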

\begin{proof} See \cite[Lemma]{lm0}. The 1-form $\omega_p(v)=\langle (H\phi(p)+N(p))\times (d\phi_p)(v),a\rangle$, $v\in T_p S$ is closed because $H$ is constant. Then we apply the Stokes' theorem.
\end{proof}
\begin{theorem}\label{t3} Let $\Sigma$ be a right cylinder and let $\phi:S\rightarrow\r^3$ be a minimal immersion of a compact surface $S$ such that $\phi(S)$ lies in $K$ and $\phi(S)$ meets $\Sigma$ at a constant angle along $\partial S$. If $\phi(\partial S)$ is a graph on $C$, then $\phi(S)$ describes a (horizontal) planar domain.
\end{theorem}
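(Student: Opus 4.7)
The plan is to combine the flux formula (with $H=0$) with the harmonicity of the height function $z=\phi_3$ on a minimal immersion, then conclude via integration by parts on $S$.

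\emph{Step 1 (flux forces orthogonal contact).} Applying the preceding lemma with $H=0$ gives $\int_{\partial S} N\times\alpha'\,ds=0$ in $\r^3$. Along $\partial S\subset\Sigma$ I would introduce the orthonormal frame $\{T,\kappa,\eta\}$, where $T=\alpha'$, $\eta$ is the outward unit normal to $\Sigma$ (horizontal, since $\Sigma$ is a right cylinder) and $\kappa$ completes the frame inside $T\Sigma$. Decomposing $T=\cos\beta\,\tau+\sin\beta\,e_3$ with $\tau$ the horizontal unit tangent to $C$ at the projected point, one finds $\kappa=-\sin\beta\,\tau+\cos\beta\,e_3$. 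The constant-angle condition reads $N=\cos\gamma\,\eta+\epsilon\sin\gamma\,\kappa$ for a sign $\epsilon$ which is globally constant by connectedness of $\partial S$ (forced by the graph hypothesis). A short cross-product computation yields $N\times T=\cos\gamma\,\kappa-\epsilon\sin\gamma\,\eta$, so taking the $e_3$-component of the vanishing flux gives
\begin{equation*}
0=\cos\gamma\int_{\partial S}\cos\beta\,ds.
\end{equation*}
Parametrizing $\phi(\partial S)=\{(c(\theta),h(\theta)):\theta\in[0,2\pi]\}$, the integrand becomes $|c'(\theta)|\,d\theta$, so the integral equals the length of $C$, which is strictly positive. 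Hence $\cos\gamma=0$: the contact is orthogonal and $N=\pm\kappa$ on $\partial S$.

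\emph{Step 2 (harmonic height with vanishing conormal derivative).} Since $\phi$ is minimal, $\Delta_S z=2H\langle N,e_3\rangle=0$. At any $p\in\partial S$, using $N=\epsilon\kappa$,
\begin{equation*}
\nabla_S z=e_3-\langle e_3,N\rangle N=e_3-\cos\beta\,\kappa=\sin\beta\,T,
\end{equation*}
which is tangent to $\partial S$. Thus the outer conormal derivative $\partial z/\partial\nu$ vanishes identically along $\partial S$, and Green's identity on $S$ yields
\begin{equation*}
\int_S|\nabla_S z|^2\,dA=\int_{\partial S} z\,\partial_\nu z\,ds-\int_S z\,\Delta_S z\,dA=0.
\end{equation*}
Hence $z$ is constant on $S$, so $\phi(S)$ lies in a horizontal plane, as claimed.

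The main obstacle is purely bookkeeping: fixing the orientations in $\{T,\kappa,\eta\}$ so that the sign $\epsilon$ is globally well-defined, and verifying the elementary identity $\cos\beta\,ds=|c'|\,d\theta$. The latter is exactly where the graph hypothesis on $\phi(\partial S)$ is essential---without it, cancellations in $\int\cos\beta\,ds$ could prevent the conclusion $\cos\gamma=0$ and the whole argument would collapse.
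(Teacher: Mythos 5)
Your proof is correct, and its second half takes a genuinely different route from the paper. Step 1 is essentially the paper's own flux argument: there the authors assume $\gamma\neq\pi/2$ and derive the contradiction $0=\cos\gamma\int_{\partial S}\varphi(s)\,ds$ with $\varphi\neq 0$, which is exactly your identity $0=\cos\gamma\int_{\partial S}\cos\beta\,ds$ phrased contrapositively (and the sign ambiguities you worry about are harmless: whatever sign occurs in $N\times T$ is constant along the connected curve $\partial S$ and factors out of the integral, so $\cos\gamma=0$ follows regardless). Where you diverge is Step 2. Once $\gamma=\pi/2$ is known, the paper lowers a horizontal plane from above and applies the tangency principle, either at an interior first-contact point or at the highest boundary point, where orthogonality of the contact forces $N$ to be vertical so that the boundary maximum principle applies. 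You instead exploit that the height function $z$ is harmonic on a minimal immersion and that $\gamma=\pi/2$ makes $N$ tangent to $\Sigma$ along $\partial S$, so $\nabla_S z=\sin\beta\,T$ is tangent to the boundary, $\partial_\nu z\equiv 0$, and Green's identity gives $\int_S|\nabla_S z|^2\,dA=0$ in one stroke. Your version is arguably cleaner for this particular statement: it is a single global integral identity, requires no maximum principle, no first-contact analysis, and no Serrin-type corner or boundary-point lemma, and it treats immersions with no side conditions. The paper's comparison argument, on the other hand, is the one that extends to the nonzero mean curvature and noncompact situations treated elsewhere in the paper, which is presumably why the authors keep it uniform. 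Both arguments use the graph hypothesis in the same essential place, namely to guarantee that $\varphi=\cos\beta$ never vanishes so that $\int_{\partial S}\cos\beta\,ds\neq 0$.
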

\begin{proof} Denote $\textbf{n}$ the unit normal vector to $\Sigma$ pointing outside and let $N$ be a Gauss map on $S$. Assume first that $\langle N,\textbf{n}\rangle=0$ along $\partial S$, that is, the contact angle is $\gamma=\pi/2$. Take a horizontal plane $\Pi(t)$ of equation $z=t$ for $t$ sufficiently big such that $\Pi(t)\cap S=\emptyset$. We move $\Pi(t)$ vertically down until intersects with $S$. If this occurs at some interior point and because $\Pi(t)$ and $S$ are both minimal, the tangency principle implies that $S\subset \Pi$, proving the result. On the contrary, we arrive until $\Pi(t)$ touches some boundary point $p\in\partial \Sigma$. In fact, this occurs in the highest point of $S$ with respect to $\Pi$. At such point, the tangent vector to $\partial S$ is horizontal. Since $\textbf{n}$ is also horizontal, then necessarily $N$ is vertical and so (up an orientation on $S$ which does not change the value of the mean curvature) the vector $N$ agrees with the normal to $\Pi$. The tangency principle implies that $S$ is a subset of the plane.
Assume now that $\gamma\not=\pi/2$ and we arrive to a contradiction. We write
$\alpha(s)=\beta(s)+\langle\alpha(s),a\rangle a$, where $\beta$ is the orthogonal projection on $\Pi$ and $a=(0,0,1)$. Then
$\alpha'(s)=\beta'(s)+\langle\alpha'(s),a\rangle a$ and $\beta'(s)\not=0$ since $\partial S$ is a graph on $C$. For each $s\in\r$, we consider $\textbf{t}(s)$ the unit tangent vector to $C$ such that $\textbf{n}\times\textbf{t}=a$. Then $\beta'(s)=\varphi(s)\textbf{t}(s)$ for some non-zero function $\varphi$.
On the other hand, by the flux formula \ref{flux} when $H=0$, we have:
\begin{eqnarray}\label{flux2}
0&=&\int_{\partial S}\langle N(s),\alpha'(s)\times a\rangle\ ds=\int_{\partial S}\langle N(s),\beta'(s)\times a\rangle\ ds\nonumber\\
&=&\int_{\partial S}\varphi(s)\langle N(s),\textbf{n}(s)\rangle\ ds =\cos\gamma\int_{\partial S}\varphi(s)\ ds,
\end{eqnarray}
which it is a contradiction because $\varphi(s)\not=0$ for all $s$.
\end{proof}
We give a result of non-existence of capillary surfaces orthogonally intersecting a right cylinder.
\begin{theorem} Let $\Sigma$ be a right cylinder and let $\phi:S\rightarrow\r^3$ be an immersion of a compact surface $S$ with constant mean curvature $H$, $H\not=0$. Assume that $\phi(S)$ meets $\Sigma$ at a constant angle $\gamma$. If $\phi(\partial S)$ is homotopic to $C$ on $\Sigma$, then $\gamma\not=\pi/2$.
\end{theorem}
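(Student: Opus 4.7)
The plan is to apply the flux formula~\eqref{flux} with the vertical direction $a=(0,0,1)$ and derive a contradiction between its two sides under the assumption $\gamma=\pi/2$.

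First I would show that the right-hand side of~\eqref{flux} vanishes. Since $\alpha$ lies on $\Sigma=C\times\r$, I decompose $\alpha(s)=\beta(s)+z(s)\,a$, where $\beta$ is the projection on $\Pi$. Then $\alpha'\times a=\beta'\times a$ because $a\times a=0$, and since $\beta'$ is a scalar multiple of the unit tangent $\textbf{t}$ to $C$ and $\textbf{t}\times a=\textbf{n}$, one obtains $\alpha'\times a=\varphi(s)\,\textbf{n}(s)$ for some scalar $\varphi$. Hence $\langle N\times\alpha',a\rangle=\langle N,\alpha'\times a\rangle=\varphi\,\langle N,\textbf{n}\rangle=\varphi\cos\gamma$, which is identically zero when $\gamma=\pi/2$. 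This is essentially the same computation used in the proof of Theorem~\ref{t3}.

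Next I would show that the left-hand side of~\eqref{flux} is non-zero. Expanding $\alpha\times\alpha'=(\beta+z\,a)\times(\beta'+z'\,a)$ and pairing with $a$ kills every term containing $a$ in the cross product, leaving $\langle\alpha\times\alpha',a\rangle=\langle\beta\times\beta',a\rangle=\beta_1\beta_2'-\beta_2\beta_1'$. Therefore the left-hand side equals $2H\cdot A$, where $A$ is the algebraic planar area swept by the closed curve $\beta$. Since $\phi(\partial S)$ is homotopic to $C$ in $\Sigma$, the projection $\beta$ is a loop in $C\subset\Pi$ of degree $\pm 1$ onto $C$, and a Green-theorem argument gives $A=\pm\,\mathrm{Area}(\Omega)\neq 0$. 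Combined with $H\neq 0$, this contradicts the vanishing of the right-hand side.

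The main delicate point I anticipate is the identification $A=\pm\,\mathrm{Area}(\Omega)$: one has to verify that the line integral $\int_{\partial S}(\beta_1\beta_2'-\beta_2\beta_1')\,ds$ depends only on the free homotopy class of $\beta$ in $C$, and that for the standard parametrization of $C$ traversed once its value is $\pm 2\,\mathrm{Area}(\Omega)$. If $\phi(\partial S)$ has several components, an additional short argument is needed to see that the orientations induced from $\partial S$ as the boundary of the immersed surface make the contributions add rather than cancel, which is guaranteed by the hypothesis that each component represents the same class as $C$ in $\Sigma$. Aside from this bookkeeping, the argument is a clean application of the flux identity with a single, canonical choice of direction vector.
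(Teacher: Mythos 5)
Your proposal is correct and follows essentially the same route as the paper: apply the flux formula with $a=(0,0,1)$, observe that the left-hand side is $H$ times (twice) the signed area of $\Omega$, hence non-zero since the projection of $\phi(\partial S)$ winds once around $C$, while the right-hand side reduces to $\cos\gamma\int_{\partial S}\varphi\,ds$, which vanishes when $\gamma=\pi/2$. The only difference is that you spell out the Green-theorem and orientation bookkeeping for the area term, which the paper states without detail.
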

\begin{proof} Let $a=(0,0,1)$. Because $\phi(\partial S)$ is homotopic to $C$, the integral $\int_{\partial S}\langle
\alpha(s)\times\alpha'(s),a\rangle\ ds$ in the left hand-side of (\ref{flux}) represents the algebraic area of $C$, that is, up a sign, it is twice the area of $\Omega$. In particular, this integral is not zero. Assume that $\gamma=\pi/2$. We use the notation of the proof of Theorem \ref{t3} writting $\beta'(s)=\varphi(s) \textbf{t}(s)$ where now $\varphi$ is a smooth function on $\partial S$. Then the right hand-side of the flux formula (\ref{flux}) gives
$$\int_{\partial S}\langle N(s)\times\alpha'(s),a\rangle\ ds= \cos(\pi/2)\int_{\partial S}\varphi(s)\ ds=0,$$ obtaining a contradiction.
\end{proof}
\section{ Complete minimal capillary surfaces outside of cylinders}\label{sec5}
In this section we consider complete capillary surfaces with zero mean curvature lying the outside of $\Sigma$. This is motivated by the examples as a piece of a catenoid that is the graph on the exterior of a round disc $\Omega\subset\Pi$, which is a minimal surface intersecting the cylinder $\Sigma=\partial\Omega\times\r$ at constant angle $\gamma$ or the planar domain $\Pi\setminus\Omega$ which is a capillary minimal surface in the exterior of $\Sigma$. In the first case, $\gamma\not=\pi/2$ and in the second setting, $\gamma=\pi/2$. More precisely, let $S$ be a punctured surface with single boundary $\partial S$ and $\phi:S\rightarrow \r^3\setminus(\mbox{int}(K))$ be a minimal immersion of surface $S$ such that $\phi(\partial S)$ is homotopic to $C$ and $\phi(S)$ meets $\Sigma$ at a constant angle along $\phi(\partial S)$.
In this case, simply, we say that $S$ is a {\it complete minimal capillary surface situated outside of $\Sigma$}.

The immersion of a small neighborhood of the puncture of $S$, topologically punctured open disk, is the {\it end} of $S$. Geometrically, it is a connected component of $S\setminus N,$ where $N\subset\r^3$ is a sufficient large compact domain.
We say that the end of $S$ is {\it regular at infinity}, if the end is expressed by a graph of the following function
\begin{equation}\label{end}
u(x,y)=\alpha\log\left(\sqrt{x^2+y^2}\right)+\beta+\frac{\gamma_1x+\gamma_2y}{x^2+y^2}+O\left(\frac{1}{x^2+y^2}\right),
\end{equation}
over the exterior of a bounded domain in a plane $\Pi_0$ with constants $\alpha, \beta, \gamma_1$ and $\gamma_2$.
When $\alpha=0$, the end is called a {\it planar end} and if $\alpha\not=0$, we have a {\it catenoidal end} (see \cite{sch}).
We also define that the end of $S$ is {\it parallel to $\Sigma$} if $\Pi_0$ is parallel to $\Pi$.
\begin{lemma}[Maximum principle at infinity \cite{mr}] Let $S_1, S_2\subset \r^3$ be two disjoint, connected, properly immersed minimal surfaces with boundary. If $\partial S_1\not=\emptyset$ or $\partial S_2\not=\emptyset$, then after possibly reindexing, the distance between $S_1$ and $S_2$ is equal to $\inf \{dist(p,q) : p\in \partial S_1, q\in S_2\}$.
\end{lemma}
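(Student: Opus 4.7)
The plan is to argue by contradiction. Set $d = \mathrm{dist}(S_1, S_2)$ and suppose neither conclusion holds: $d < \mathrm{dist}(\partial S_1, S_2)$ and $d < \mathrm{dist}(S_1, \partial S_2)$. Then there exist $\delta > 0$ and sequences $p_n \in S_1$, $q_n \in S_2$ with $|p_n - q_n| \to d$, $\mathrm{dist}(p_n, \partial S_1) \geq \delta$ and $\mathrm{dist}(q_n, \partial S_2) \geq \delta$ for every $n$; disjointness forces $d > 0$. The strategy is to push this minimizing pair into a configuration in which the classical strong maximum principle for minimal surfaces applies at interior points.

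In the compact case where $(p_n)$ has a bounded subsequence, properness makes $S_1$ and $S_2$ closed in $\mathbb{R}^3$ and gives subsequential limits $p_n \to p_\infty \in \mathrm{int}(S_1)$ and $q_n \to q_\infty \in \mathrm{int}(S_2)$, with $|p_\infty - q_\infty| = d$. Choosing coordinates so that the segment $\overline{p_\infty q_\infty}$ is vertical, both surfaces become minimal graphs $z = u_1$, $z = u_2$ over a common horizontal disc with $u_2 \geq u_1 + d$ and equality at the origin. The function $w = u_2 - u_1 - d$ is nonnegative, vanishes at the origin, and satisfies a linear uniformly elliptic equation obtained by subtracting the two minimal-surface equations; Hopf's strong maximum principle gives $w \equiv 0$ in a neighborhood, and analyticity together with connectedness promote this to the global identity $S_2 = S_1 + (q_\infty - p_\infty)$. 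Consequently $\partial S_2 = \partial S_1 + (q_\infty - p_\infty)$, so $\mathrm{dist}(\partial S_1, S_2) \leq d$, contradicting the standing assumption.

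In the remaining case $|p_n| \to \infty$, translate the immersions by $-p_n$, obtaining $\phi_i^n$ defined on geodesic balls of radius $\delta$ around the marked points. The monotonicity formula and the standard interior curvature estimates for minimal surfaces provide uniform area and $|A|^2$ bounds on the translated surfaces over balls of fixed radius. A subsequence then converges smoothly on compact sets to properly immersed minimal surfaces $\widetilde{S}_1, \widetilde{S}_2 \subset \mathbb{R}^3$ without boundary in any bounded region, with $0 \in \widetilde{S}_1$ and a point $q^\ast \in \widetilde{S}_2$ satisfying $|q^\ast| = d$; the limits are disjoint and at distance exactly $d$, realized at the interior points $0$ and $q^\ast$. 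The same Hopf argument yields $\widetilde{S}_2 = \widetilde{S}_1 + q^\ast$. Transferring this back to the original immersions along the sequence $p_n$ would force $S_1$ and $S_2 - q^\ast$ to be asymptotically tangent in arbitrarily large regions near $p_n$, in contradiction with disjointness combined with properness.

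The main obstacle is this non-compact step. Extracting a smooth geometric limit of the translated immersions and verifying that the limit is itself a \emph{properly immersed} minimal surface to which the maximum principle may be applied uses the proper-immersion hypothesis essentially, alongside curvature and area estimates that are nontrivial for general minimal surfaces in $\mathbb{R}^3$. Converting the resulting pair of parallel translates into a contradiction for the original surfaces is equally delicate and is the reason the lemma is stated for properly immersed surfaces rather than merely complete ones. Once this compactness step is in hand, the rest of the argument is a routine application of Hopf's boundary-point lemma and unique continuation, just as in the compact case.
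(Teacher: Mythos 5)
The paper does not actually prove this lemma: it is quoted verbatim from Meeks and Rosenberg \cite{mr}, where the proof occupies a large part of that article and rests on the structure theory of minimal laminations, curvature estimates for \emph{stable} minimal surfaces, and a local removable singularity theorem. So there is no internal proof to compare against; the question is whether your sketch stands on its own, and it does not.

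Your compact case is the classical interior maximum principle and is fine in outline. The genuine gap is the non-compact case, which is the entire content of the result. First, the claim that ``the monotonicity formula and the standard interior curvature estimates for minimal surfaces provide uniform area and $|A|^2$ bounds'' on the translated surfaces is false: there are no interior curvature estimates for general (unstable) minimal surfaces. Rescaled helicoids, or pieces of embedded minimal surfaces of large genus, have second fundamental form blowing up at interior points while remaining within bounded distance of a plane; such estimates require stability (Schoen) or bounds on area and genus (Choi--Schoen), neither of which is available here. Without them the translated sequence need not converge smoothly to a properly immersed minimal surface --- a priori one only obtains a minimal lamination with a possible singular set, and taming that limit is exactly where \cite{mr} deploys its machinery. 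Second, even granting the smooth limit and the identity $\widetilde S_2=\widetilde S_1+q^\ast$, your closing step --- that $S_1$ and $S_2-q^\ast$ being ``asymptotically tangent in arbitrarily large regions'' contradicts ``disjointness combined with properness'' --- is not a contradiction but is precisely the statement under proof: a maximum principle at infinity is exactly the assertion that two disjoint, proper minimal surfaces cannot approach each other asymptotically away from their boundaries. As written, the argument is circular at the decisive point. A correct treatment cannot avoid the lamination and stability analysis of \cite{mr} (or an equivalent substitute), and the honest options are to cite that theorem, as the paper does, or to reproduce its full argument.
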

\begin{theorem}\label{t4}
Let $S$ be a complete embedded minimal capillary surface situated outside of $\Sigma$ and meets $\Sigma$ at a constant angle $\gamma$ along $\partial S$ and parallel to $\Sigma$. Assume that the boundary $\partial S$ is a graph on $C$.
\begin{enumerate}
\item If the end of $S$ is planar then $S$ is part of a parallel plane to $\Pi$.
\item If the end of $S$ is catenoidal then $\gamma\not=\pi/2$.
\item Assume that $\Sigma$ is a circular right cylinder with axis $L$. If the end of $S$ is catenoidal then $S$ is part of a catenoid which has the rotational axis $L$.
\end{enumerate}
\end{theorem}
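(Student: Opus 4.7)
All three parts share the same analytic engine: the flux formula (\ref{flux}) of the preceding lemma, applied not to $S$ directly (which is non-compact) but to the compact pieces $S_R:=S\cap B_R$ and then letting $R\to\infty$. With $H=0$ the formula on $S_R$ reduces to $\int_{\partial S_R}\langle N\times\alpha',a\rangle\,ds=0$, and writing $\partial S_R=\partial S\cup C_R$ with $C_R$ a large almost-horizontal curve in the end, a direct calculation using the asymptotic expansion (\ref{end}) with $a=(0,0,1)$ gives $\lim_{R\to\infty}\int_{C_R}\langle N\times\alpha',a\rangle\,ds=-2\pi\alpha$. On the other hand, exactly as in the proof of Theorem \ref{t3}, writing $\beta'(s)=\varphi(s)\textbf{t}(s)$ with $\varphi$ of constant non-zero sign (since $\partial S$ is a graph on $C$) produces
\[
\int_{\partial S}\langle N(s)\times\alpha'(s),a\rangle\,ds=\cos\gamma\int_{\partial S}\varphi(s)\,ds.
\]
Hence $\cos\gamma\int_{\partial S}\varphi(s)\,ds=2\pi\alpha$ with $\int_{\partial S}\varphi(s)\,ds\neq0$.

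For part (2), $\alpha\neq0$ immediately forces $\cos\gamma\neq0$. For part (1), the identity forces $\cos\gamma=0$, so the contact is orthogonal, and it remains to show $S$ is a horizontal planar domain. Let $z_{\max}:=\sup_S z$; since the end is asymptotic to the horizontal plane $\{z=\beta\}$ and $\partial S$ is compact, $z_{\max}<\infty$. If $z_{\max}$ is attained at an interior point, the tangency principle applied to $S$ and $\{z=z_{\max}\}$ gives $S\subset\{z=z_{\max}\}$. If it is attained only at a boundary point $p$, then $p$ is a critical point of $z|_{\partial S}$, so the tangent of $\partial S$ at $p$ is horizontal and coincides with the boundary tangent of the horizontal plane cut by $\Sigma$; since $\gamma=\pi/2$, the Serrin corner lemma again forces $S\subset\{z=z_{\max}\}$. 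The remaining possibility $z_{\max}=\beta$ with $S$ strictly below $\{z=\beta\}$ is excluded by the maximum principle at infinity: $S$ and $\{z=\beta\}$ would be disjoint complete properly embedded minimal surfaces with $\partial S\neq\emptyset$, so their distance would be realised on $\partial S$ and hence positive, contradicting the asymptotic condition $\text{dist}(S,\{z=\beta\})=0$.

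Part (3) requires showing $S$ is invariant under all rotations about $L$; once this is established, $S$ is a minimal surface of revolution with catenoidal end and so is part of a catenoid with axis $L$ (the planar alternative being excluded by the end type). Let $\mathcal{C}_0$ be the asymptotic catenoid of $S$, with vertical axis $L'$. For any vertical plane $P$ through $L$ the associated reflection $\pi_P$ preserves $\Sigma$, so $\pi_P(S)$ is again a complete embedded minimal capillary surface on $\Sigma$ with the same contact angle, boundary $\pi_P(\partial S)$ still a graph on $C$, and asymptotic catenoid $\pi_P(\mathcal{C}_0)$ of axis $\pi_P(L')$. The plan is to run an Alexandrov reflection using vertical planes $P_t$ parallel to $P$: start from a value of $t$ at which the reflected piece $\hat S(t)$ sits on the opposite side of $S$ from $\partial S$, and decrease $t$ until first contact. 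At the critical value the tangency principle (interior, boundary, or Serrin corner, as classified in Theorem \ref{t1}) forces $\pi_P(S)=S$, so that $P$ is a symmetry plane of $S$. Since $P$ was arbitrary among vertical planes through $L$, this simultaneously pins down $L'=L$ and generates the full rotation group about $L$. The main obstacle is the initiation of this Alexandrov process, because the catenoidal end of $S$ extends to horizontal infinity and no plane $P_t$ misses $S$ at the outset; the starting configuration $\hat S(t)\cap S=\emptyset$, and the identification of the first-contact value as $t^{\ast}=0$, must instead be secured by comparing the two asymptotic catenoids $\mathcal{C}_0$ and $\pi_{P_t}(\mathcal{C}_0)$ via the maximum principle at infinity.
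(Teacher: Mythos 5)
Your treatment of the flux identity is the same as the paper's: applying (\ref{flux}) with $H=0$ to the truncated surface $S_R$ and computing the limit of the flux across the large curve in the end as $\pm 2\pi\alpha$ gives $\cos\gamma\int_{\partial S}\varphi\,ds=\pm 2\pi\alpha$ with $\int_{\partial S}\varphi\,ds\neq 0$, which settles part (2) and the claim $\gamma=\pi/2$ in part (1). Your second half of part (1) (locating $\sup_S z$ and splitting into interior maximum, boundary/corner maximum, and the non-attained case killed by the maximum principle at infinity) is a legitimate variant of the paper's argument, which instead slides the planar domain $E(t)=(\Pi\setminus\Omega)+(0,0,t)$ down onto $S$ and uses the maximum principle at infinity to force the first contact to occur at boundary points; both routes work, and both need the properness of $S$, which the paper extracts from finite total curvature.

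Part (3) is where your proposal has a genuine gap. You propose an Alexandrov reflection in vertical planes $P_t$, and you yourself flag that you cannot initiate it because the catenoidal end meets every vertical plane; you defer this to an unspecified comparison of asymptotic catenoids ``via the maximum principle at infinity,'' which is precisely the hard step (it is the content of Schoen's refined asymptotic analysis of regular ends, not of the cited lemma). There are two further obstacles you do not address: (i) for $t\neq 0$ the reflection $\pi_{P_t}$ does not preserve the circular cylinder $\Sigma$, so $\pi_{P_t}(S)$ is not a capillary surface on $\Sigma$ and the corner version of the tangency principle is unavailable at a boundary contact --- and unlike in Theorem \ref{t1}, here $\partial S$ is a graph on all of $C$, so it is not confined to a half-cylinder and boundary contact cannot be excluded during the off-axis phase; (ii) even if the method ran, you must identify the critical plane as one passing through $L$, which presupposes part of what is to be proved. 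The paper avoids all of this: it compares $S$ with $S_\theta(t)$, the rotation of $S$ about $L$ by an angle $\theta$ followed by a vertical translation by $t$. Both operations preserve $\Sigma$, so $\partial S_\theta(t)\subset\Sigma$ and the capillary corner tangency principle applies; for $t$ very negative the two surfaces are disjoint, and sliding $S_\theta(t)$ upward, the maximum principle at infinity localizes the first contact at boundary points, whence $S_\theta(t)=S$, and since this holds for every $\theta$ the surface is rotational about $L$, hence a piece of a catenoid. You should replace your reflection scheme by this rotate-and-translate comparison.
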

\begin{proof}
\begin{enumerate}
\item We claim that the contact angle $\gamma$ is $\pi/2$. Because the end of $S$ is parallel to $\Sigma$, we may assume that $\Pi_0=\Pi$ and $x,y$ are coordinates in $\Pi$. Let $C_R\subset\Pi$ be the circle of radius $R$ and centered at the origin of $\Pi$. Since the end of $S$ is planar, for sufficient large $R$, $\Gamma_R=S\cap (C_R\times\r)$ is asymptotic to a circle which is a vertical translation of $C_{R}$ and $S$ asymptotically meets $C_R\times\r$ at a right angle along $\Gamma_R$.
Let $S_R$ be the compact subset of $S$ contained in $C_R\times\r$. For sufficient large $R$, $\partial S_R=\partial S\cup\Gamma_R$. We write, as before, $\alpha(s)=\beta(s)+\langle\alpha(s),a\rangle a$ as a parametrization of $\partial S$ with $a=(0,0,1)$. By the flux formula (\ref{flux}) and because $H=0$, we have
\begin{equation}\label{flux3}
\cos\gamma\int_{\partial S}\varphi(s) ds=\int_{\Gamma_R}\langle \nu,a\rangle ds,
\end{equation}
where $\nu$ is outward unit conormal vector field of $S_{R}$ along $\Gamma_R$.
By (\ref{end}) and direct computation, as the radius $R$ goes to infinity, $\int_{\Gamma_R} \nu ds$ converges to a vector $2\pi\alpha N_0$, where $N_0$ is the unit normal vector of $\Pi_0$.
Since the end of $S$ is planar $(\alpha=0)$ and parallel to $\Sigma$, $\int_{\Gamma_R}\langle \nu,a\rangle ds$ converges to zero as $R\nearrow\infty$. Because $\partial S$ is graph on $C$, $\varphi(s)$ is a non-zero function. Hence, by (\ref{flux3}), $\gamma=\pi/2$.
Once proved that $\gamma=\pi/2$, we now show that $S$ is part of a parallel plane to $\Pi$. Denote $E(t)$ the vertical translation of $\Pi\setminus\Omega$ at the height $z=t$. Since the end of $S$ is regular, the total curvature of $S$ is finite. By \cite[Proposition 11.5]{fang}, $S$ is proper. Because $S$ is a planar end asymptotic to a horizontal plane, for $t$ sufficiently big, $E(t)$ is disjoint from $S$. Next, we descend $E(t)$ by letting $t\searrow 0$ until that $E(t)$ touches $S$ at some time $t=t_0$. Since $S$ and $E(t)$ are proper and is included in the outside of $\Sigma$ with $\partial S,\partial(\Pi\setminus\Omega)\subset\Sigma$, and by the maximum principle at infinity, the contact occurs between boundary points. Since $S$ and $E(t_0)$ meet $\Sigma$ with the constant angle ($\gamma=\pi/2$), the tangency principle implies that both surfaces coincide, that is, $S$ is a vertical displacement of $\Pi\setminus\Omega$.
\item Since the end of $S$ is catenoidal and parallel to $\Sigma$, for sufficient large $R$, $\Gamma_R=S\cap (C_R\times\r)$ is asymptotic to a circle which is a vertical translation of $C_{R}$ and $S$ asymptotically meets $C_R\times\r$ at a constant angle along $\Gamma_R$ different from $\pi/2$. Similarly as above, $\int_{\Gamma_R}\langle \nu,a\rangle ds$ converges to a nonzero constant $2\pi\alpha$ as $R\nearrow\infty$. So, by the equations (\ref{flux2}) and (\ref{flux3}), $\int_{\partial S}\langle N(s)\times\alpha'(s),a\rangle\ ds= \cos\gamma\int_{\partial S}\varphi(s)\ ds$ does not vanish. Hence $\gamma\not=\pi/2$.
\item Denote by $S_\theta$ the rotation of $S$ about $L$ and $S_\theta(t)=S_\theta+(0,0,t)$ the vertical translation of $S_\theta$ of height $t$. Because the right cylinder $\Sigma$ is circular, the boundary of $S_\theta$ remains to be included in $\Sigma$. Similarly, $S$ and $S_\theta(t)$ are proper, because they are minimal surfaces with finite total curvature. Fix $\theta$. We take $t\searrow -\infty$ so $\partial S_\theta$ is disjoint of $\partial S_\theta(t)$, which it is possible because $\partial S$ is compact and the end of $S$ is catenoidal. Next, we move up $S_\theta(t)$ vertically. By the maximum principle at infinity, the first contact point occurs between the boundary points. By tangency principle, both surfaces coincide. Since this property holds for any angle $\theta$, $S$ is a surface of revolution about the axis $L$. Hence $S$ is a part of a catenoid.
\end{enumerate}
\end{proof}

\textbf{Acknowledgement.} The first author is partially supported by MEC-FEDER
grant no. MTM2011-22547 and
Junta de Andaluc\'{\i}a grant no. P09-FQM-5088. Part of this work was realized while the first author was visiting KIAS at Seoul and the Department of Mathematics of the Pusan National University in April of 2012, whose hospitality is gratefully acknowledged.


\end{document}